\theoremstyle{plain} % definition 
\newtheorem{lemma}[equation]{Lemma} 
\newtheorem{theorem}[equation]{Theorem} 
\newtheorem{conjecture}[equation]{Conjecture}
\theoremstyle{definition}
\newtheorem{definition}[equation]{Definition} 
\theoremstyle{remark}
\newtheorem{remark}[equation]{Remark}
\numberwithin{equation}{section}
\title{Estimates of the Discrepancy Function in Exponential Orlicz Spaces}
\author{Gagik Amirkhanyan}   %  can use \and  
\address{ School of Mathematics, Georgia Institute of Technology, Atlanta GA 30332, USA}
\email {gagik@math.gatech.edu}
\thanks{Research supported in part by  NSF grants DMS-0968499 (G. Amirkhanyan and M. Lacey), DMS-1101519 (D. Bilyk), and a grant from the Simons Foundation \#229596  (M. Lacey).  }
\author{Dmitriy Bilyk }
\address{ School of Mathematics, University of Minnesota, Minneapolis MN 55408, USA}
\email {dbilyk@math.umn.edu}
\author{Michael Lacey}   %  can use \and  
\address{ School of Mathematics, Georgia Institute of Technology, Atlanta GA 30332, USA}
\email {lacey@math.gatech.edu}
\begin{document}

\begin{abstract}
We prove that in all  dimensions $ n\ge 3$ for every integer $ N\ge 1$ there exists a distribution of points $ \mathcal P \subset [0,1] ^{n}$ of cardinality $ N$, 
for which the associated discrepancy function $\mathcal  D_N$  
satisfies the estimate 
\begin{equation*}
\lVert \mathcal D_N\rVert_{\operatorname {exp}\big(L ^{\frac 2 {n+1}}\big) } \lesssim (\log N) ^{\frac {n-1}2} \,. 
\end{equation*} 
This has recently been proved by M.~Skriganov, using random digit shifts of binary digital nets, 
building upon the remarkable examples  of  W.L.~Chen and M.~Skriganov.  Our approach, developed independently, 
complements that of Skriganov. 
\end{abstract}

\maketitle

%%%%%%%%%%%%%%%%%%%%%%%%%%%%%% SECTION  SECTION SECTION
%%%%%%%%%%%%%%%%%%%%%%%%%%%%%% SECTION  SECTION SECTION 
\section{Introduction} %\label{s.}

Given a collection $ \mathcal P$ of $ N$ points in the unit cube in dimension $ n$, the discrepancy function associated to $ \mathcal P$ is defined as 
\begin{equation}\label{e:discrepdef}
\mathcal D_N [\mathcal P, X] := {\sharp}(\mathcal P \cap [0,x]) - N \lvert  [0,X]\rvert \,,  
\end{equation}
where $[0,X]$ is the rectangular box anchored at the origin and $ X= (x_1,...,x_n) \in [0,1] ^{n}$.   
%In the literature, the collection $ \mathcal P$ is referred to as a \emph{distribution of points}. 
The optimal $ L ^{p}$-estimates for the discrepancy function are well-known, aside from the endpoint cases of $ p=1, \infty  $. 
In this article we continue the  theme begun in \cite{MR2573600} and extend  it to higher dimensions, focusing on the exponential Orlicz space estimates for the 
discrepancy function in dimensions $ n\ge 3$.  

Let $\psi:\, \mathbb R^+ \rightarrow \mathbb R^+$ be an increasing convex function with $\psi (0) = 0$. The Orlicz space $L^\psi$ associated to $\psi$ is  the class of functions for which the norm 
\begin{equation}\label{e:expL0}
\| f \|_{L^\psi}  := \inf \big\{ K>0:\, \int_{[0,1]^n} \psi \big( |f(x)|/K \big) dx \le 1\big\}
\end{equation}
is finite.  The exponential Orlicz spaces $ \operatorname {exp}(L ^{\alpha }) $ are Orlicz spaces associated to the function $\psi$ which equals $e^{x^\alpha}-1$ for large $x$. Exponential Orlicz norms have different equivalent definitions. The one that is most important for this paper is 
\begin{equation} \label{e:expL}
\lVert f\rVert_{\operatorname {exp}(L ^{\alpha })}  \simeq\sup _{q\ge 1} q ^{- \alpha } \lVert f\rVert_{q},
\end{equation}
which allows one to estimate the exponential norm by estimating the $L^q$ norms and carefully keeping track of the constants. 

We prove the following theorem, which as this paper was in final edits, we discovered had been proved by M.~Skriganov \cite{MR2893524}.  

%%%%%%%%%%%%%%%%%%%%%%%%%%%%%% CONJECTURE CONJECTURE CONJECTURE
\begin{theorem}\label{t:} In all dimensions $ n\ge 2$ for every integer $ N\ge 1$ there exists a %point 
distribution $ \mathcal P \subset [0,1]^n$ of   $ N$ points  such that 
\begin{equation} \label{e:}
\lVert D_N\rVert_{\operatorname {exp}\big(L ^{\frac 2 {n+1}}\big)} \lesssim (\log N) ^{\frac {n-1}2} \,. 
\end{equation}
\end{theorem}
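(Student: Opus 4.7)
The plan is to reduce the $\exp(L^{2/(n+1)})$ bound to an $L^{q}$ estimate with explicit $q$-dependence, and then produce a randomized point configuration for which this $L^{q}$ bound can be verified by Littlewood--Paley theory and Khintchine's inequality. By the equivalence \eqref{e:expL}, the theorem follows once we establish
\begin{equation*}
\lVert D_N\rVert_{q}\lesssim q^{(n+1)/2}(\log N)^{(n-1)/2},\qquad q\ge 2,
\end{equation*}
with the implied constant depending only on the dimension.

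For the point set I would take a Chen--Skriganov digital $(t,m,n)$-net in an appropriate base, with $N\asymp 2^{m}$ points, randomized by an independent uniform digital shift in each coordinate. The shift is what turns the Haar coefficients of $D_N$ into genuinely independent Rademacher-type sums. Expanding
\begin{equation*}
D_N \;=\; \sum_{\vec r\in\mathbb{Z}_+^{n}}f_{\vec r},\qquad f_{\vec r}\;=\;\sum_{R:\,\mathrm{shape}(R)=\vec r}\frac{\langle D_N,h_R\rangle}{\lvert R\rvert}\,h_R
\end{equation*}
in the multiparameter Haar basis, the defining property of a $(t,m,n)$-net forces $f_{\vec r}\equiv 0$ whenever $\lvert\vec r\rvert:=r_1+\cdots+r_n<m-t$, so the only shapes that contribute nontrivially sit at total level $\lvert\vec r\rvert\approx m$, and the number of such shapes is $\sim (\log N)^{n-1}$.

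The $L^{q}$ bound is then produced in two stages. The product-form Littlewood--Paley inequality controls $\lVert D_N\rVert_{q}$ by the $L^{q}$ norm of the square function $\bigl(\sum_{\vec r}\lvert f_{\vec r}\rvert^{2}\bigr)^{1/2}$ at a cost of $q^{n/2}$ (a factor of $\sqrt q$ per coordinate). Within each $f_{\vec r}$ the random digital shift produces a Rademacher-type sum over the rectangles of shape $\vec r$, and Khintchine's inequality contributes the remaining $\sqrt q$, giving the total power $q^{(n+1)/2}$. Since the individual Haar coefficients of the net satisfy $\lvert\langle D_N,h_R\rangle\rvert\lesssim\lvert R\rvert$, one has $\lVert f_{\vec r}\rVert_{2}\lesssim 2^{-\lvert\vec r\rvert/2}$; summing the square function over the $\sim (\log N)^{n-1}$ admissible shapes at scale $\lvert\vec r\rvert\sim m$ yields the remaining factor of $(\log N)^{(n-1)/2}$.

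The main obstacle is the randomization step: one must verify that the digital shifts really do produce Rademacher coefficients of the relevant Haar functions, with enough independence across rectangles of the same shape for Khintchine's inequality to be invoked sharply. Once this is arranged, the Littlewood--Paley step is standard and the combinatorial count of shapes is elementary; the delicate point is the base-$b$ arithmetic inside the net that converts a single uniform shift into the independent signs needed at each Haar frequency, and doing so without losing any of the $\sqrt q$ factors along the way.
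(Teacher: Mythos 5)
Your high-level reduction (via \eqref{e:expL}) to the bound $\lVert \mathcal D_N\rVert_q\lesssim q^{(n+1)/2}(\log N)^{(n-1)/2}$ and your choice of point set (a digit-shifted digital net) match the paper. But the structural claim on which your estimate rests is false, and it conceals exactly the difficulty that produces the exponent $(n+1)/2$ rather than the conjectured $(n-1)/2$. The net property does kill the \emph{coarse} part of the spectrum: in the paper's expansion \eqref{e.poisson} the surviving frequencies $L$ lie in $\theta(D^\perp)\setminus\{0\}$ and hence have $\rho(L)\ge s-\delta+1$. It does \emph{not} confine the spectrum to shapes of total level $\approx m$. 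Every shape $\overline{\rho}$ with $m-t\lesssim|\overline{\rho}|\le nm$ contributes, and there are $\sim(\log N)^{n}$ of these, not $(\log N)^{n-1}$. The pieces at fine levels $|\overline{\rho}|=k\gg m$ are small in $L^2$ (their coefficients carry the factor $2^{s-k}$ of \eqref{e.Mform}, to be weighed against $\sharp\Lambda_0(\overline{\rho})\le 2^{k-s+\delta}$, so each piece has $L^2$ norm $\sim 2^{(s-k)/2}$), but they are \emph{not} small in $L^\infty$, so no pointwise count of shapes disposes of them. Handling this tail is where the paper pays: each level obeys $\lVert M_k\rVert_{2q}\lesssim q^{(n-1)/2}s^{(n-1)/2}2^{(s-k)/2q}$, and summing $\sum_k 2^{(s-k)/2q}\lesssim q$ costs a full extra power of $q$, landing at $q^{(n+1)/2}$ in \eqref{e:fullPower}. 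Your argument never confronts this sum over levels; note also that if your claim were literally true, then (since each shape's contribution is pointwise $O(1)$) the square function would be deterministically $\lesssim(\log N)^{(n-1)/2}$ and Littlewood--Paley alone would give $q^{n/2}$ with no randomization — a stronger statement than the theorem, which should give you pause.

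Two further problems with the execution. First, the Khinchin step ``within each $f_{\vec r}$ over the rectangles of shape $\vec r$'' is vacuous: rectangles of a fixed shape tile the cube, so at each point only one term of that sum is nonzero and there is no Rademacher sum to square. The randomness of the digital shift must instead be exploited \emph{across} frequencies or shapes: in the paper one decouples $Y$ from $T$ via $Y\mapsto Y\oplus T$ (formula \eqref{e.mShifts1}), and then applies the hyperbolic Littlewood--Paley inequality (Lemma \ref{l:hyperbolic}) in the shift variable $T$ to the sum over $\overline{\rho}$ with $|\overline{\rho}|=k$ fixed, which costs only $(C\sqrt q)^{n-1}$ because fixing $n-1$ coordinates of $\overline{\rho}$ determines the last. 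Second, your claim $\lVert f_{\vec r}\rVert_2\lesssim 2^{-|\vec r|/2}$ is inconsistent with the conclusion you draw from it (it would make the square function $o(1)$ as $N\to\infty$); the correct statement is $\lVert f_{\vec r}\rVert_2\lesssim 2^{(s-k)/2}$ at level $k=|\vec r|$, which is $O(1)$ at the critical level and decays below it — and converting that $L^2$ decay into an $L^q$ estimate without losing more than the stated power of $q$ is precisely the content of the paper's Lemma \ref{l.MEstimate}.
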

%%%%%%%%%%%%%%%%%%%%%%%%%%%%%% CONJECTURE CONJECTURE CONJECTURE

It is well-known that the right-hand side of \eqref{e:} is optimal (since  is the best bound for the $L^q$ norms in dimension $n\ge 2$ \cites{MR0066435,MR553291,MR610701}), however,  the left-hand side does not seem to be.   Skriganov \emph{op. cit.}, indicates that 
this conjecture is indeed true: 

%%%%%%%%%%%%%%%%%%%%%%%%%%%%%% CONJECTURE CONJECTURE CONJECTURE
\begin{conjecture}\label{j:}
For dimensions $ n\ge 2$, for all integers $ N\ge 1$, there is a choice of $ \mathcal P$ of cardinality $ N$ so that 
\begin{equation} \label{e:}
\lVert \mathcal D_N\rVert_{\operatorname {exp}\big(L ^{\frac 2 {n-1}}\big)} \lesssim (\log N) ^{\frac {n-1}2} \,. 
\end{equation}
\end{conjecture}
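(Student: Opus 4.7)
}
By the equivalence \eqref{e:expL}, the conjecture is equivalent to the existence of a point set $\mathcal P$ of cardinality $N$ satisfying
\[
\lVert \mathcal D_N\rVert_q \lesssim q^{(n-1)/2}\,(\log N)^{(n-1)/2}, \qquad q\ge 2,
\]
with constant depending only on $n$. The natural source of the $q^{(n-1)/2}$ factor is a multi-parameter square function: for a digital net, the Walsh expansion of $\mathcal D_N$ is essentially supported on ``hyperbolic'' frequencies $\vec r=(r_1,\ldots,r_n)\in\mathbb Z_{\ge 0}^n$ with $|\vec r|=\log_b N$, and there are $\sim (\log N)^{n-1}$ such $\vec r$. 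If these blocks decoupled into i.i.d.\ Rademachers, Khintchine's inequality would immediately produce the factor $q^{(n-1)/2}$.

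My plan is to start from a Chen--Skriganov digital $(t,K,n)$-net in base $b$ with $N=b^K$, equipped with $n$ independent random digit shifts (one per coordinate), as used by Skriganov in his proof of Theorem~\ref{t:}. Decompose $\mathcal D_N=R+\sum_{|\vec r|=K}F_{\vec r}$, where $R$ is an $O(1)$ low-frequency remainder, each block $F_{\vec r}$ is a signed sum of $L^2$-normalised Haar functions of shape $\vec r$, and $\lVert F_{\vec r}\rVert_\infty\lesssim 1$ from the net combinatorics. A random digit shift acts on each Haar coefficient by multiplication by a sign depending only on the shifts in the coordinates where $r_i>0$.

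The heart of the argument is a sharp multi-parameter Khintchine-type estimate
\[
\Big\lVert\sum_{|\vec r|=K}F_{\vec r}\Big\rVert_q \lesssim q^{(n-1)/2}\,\Big\lVert\Big(\sum_{|\vec r|=K}|F_{\vec r}|^2\Big)^{1/2}\Big\rVert_\infty,
\]
combined with $\lVert(\sum_{\vec r}|F_{\vec r}|^2)^{1/2}\rVert_\infty\lesssim (\log N)^{(n-1)/2}$, which is immediate from the pointwise bound on each block and the cardinality count. Inserting this back into the opening reduction would yield the conjecture.

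The main obstacle, and the reason the conjecture remains open, is the multi-parameter Khintchine step with the sharp exponent $q^{(n-1)/2}$. The random digit shifts provide independence only within each coordinate direction and not among the full $(n-1)$-parameter family of frequencies, so a naive iteration of the one-parameter Khintchine inequality loses a factor of $q^{1/2}$ per coordinate. This is precisely the loss that degrades the exponent from $\alpha=2/(n-1)$ in the conjecture to $\alpha=2/(n+1)$ in Theorem~\ref{t:}. Closing the gap seems to demand either a finer decoupling argument --- for instance an additional layer of randomisation in the spirit of Owen scrambling, or a decoupling of Bourgain--Demeter type adapted to the product Walsh setting --- or a genuinely $n$-parameter Chang--Wilson--Wolff concentration inequality, neither of which is currently available with the required sharp $q$-dependence.
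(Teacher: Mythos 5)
The statement you are addressing is a \emph{conjecture}: the paper does not prove it, and neither do you. Your text is candid about this --- the ``heart of the argument,'' the multi-parameter Khintchine/decoupling estimate with the sharp constant $q^{(n-1)/2}$ across the full family $\{|\vec r|=K\}$, is exactly the content of the conjecture, and you state explicitly that no such estimate is currently available. So there is a genuine gap, and it is the entire central step; what you have written is a correct description of the problem and of the obstruction, not a proof. For the record, the framework you outline (digital net, random digit shifts, decomposition into hyperbolic frequency blocks, reduction via \eqref{e:expL} to $L^q$ bounds with controlled constants) is essentially the scheme the paper uses to prove the weaker Theorem~\ref{t:} with exponent $2/(n+1)$.

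One point of your diagnosis is worth correcting, because it matters for anyone trying to close the gap. You attribute the degradation from $2/(n-1)$ to $2/(n+1)$ to a loss of $q^{1/2}$ per coordinate in iterating the one-parameter Khintchine inequality. In the paper's argument that is not where the loss occurs: the hyperbolic Littlewood--Paley inequality (Lemma~\ref{l:hyperbolic}) already delivers the sharp factor $[C\sqrt q\,]^{\,n-1}$ for each fixed diagonal $|\overline{\rho}|=k$, precisely because the vectors have prescribed length and only $n-1$ applications are needed. The extra full power of $q$ enters afterwards, when the $\sim ns$ diagonals are summed: the crude bound \eqref{e.lpM}, which estimates the integral of a $q$-fold product of the factors $\delta(\overline{\rho},Y)$ by the integral of a single one, leaves only the decay $2^{(s-k)/(2q)}$ in $k$, and $\sum_k 2^{(s-k)/(2q)}\sim q$. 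So the missing ingredient is not a better per-coordinate Khintchine inequality but a genuinely multilinear control of the correlations among the sets $\{\delta(\overline{\rho},\cdot)\neq 0\}$ across different diagonals --- which is consistent with your closing remarks about needing a finer decoupling or concentration input, but locates the difficulty more precisely.
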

%%%%%%%%%%%%%%%%%%%%%%%%%%%%%% CONJECTURE CONJECTURE CONJECTURE
In dimension $n=2$ this statement has been proved in \cite{MR2573600} using the digit shifts of the famous van der Corput set.

In dimensions $n\ge 3$, the first \emph{explicit (non-random)}  point distributions  with $\| \mathcal D_N \|_p \lesssim (\log N) ^{\frac {n-1}2}$ are the remarkable examples  obtained by Chen and Skriganov \cites{MR1896098} (in $L^2$) and Skriganov \cite{MR2283797} ($L^p$, $1< p<\infty$), see also {\cite{MR1618631}. In \cites{MR2487691} Chen and Skriganov also considered \emph{random} digit shifts of simpler constructions and showed that they too, on the average, have optimal  $L^2$ norm of  the discrepancy function.
  
The analysis of these constructions exhibits striking similarities to themes related to small ball problems and expansions of the 
Brownian sheet. The fine analysis of these objects is closely related to the (infamous) $ p= \infty $ endpoint analysis 
of the discrepancy function \cite{MR2414745}, also see \cite{MR2817765,1207.6659} for more background on the subject and the techniques.   

A heuristic informed by these connections suggests that the conjecture should be proved 
by estimating the $ L ^{q}$-norm using Littlewood-Paley inequalities $ n-1$ times, with each application giving one square root of $ q$, see \S\ref{s:lp}.     This is just what we will do, but at a specific point in the proof we accumulate one  more power of $ q$. 
In the language of Skriganov, the Littlewood-Paley inequalities are the Khinchin inequalities; in that argument, he applies them $ n$ times. 

The authors discovered the work of Skriganov at the final stages of the editing of this manuscript.  The basic examples are the same nature, but 
there are differences in the details of the proof.  Certainly, the analysis of these examples are subtle, and it may take some time to 
tease out the different variants and details of their analysis.  

In an earlier breakthrough work  \cite{MR2283797} Skriganov showed that for each fixed $ 1< q < \infty $  and integer $ N$ there is 
a \emph{deterministic} distribution $ \mathcal P$ with $ \lVert \mathcal D_N \rVert_{q} \lesssim p^{2n} q ^{ \frac {n+1}2} (\log N) ^{ \frac {n-1}2}$, where $p$ is a prime greater than $qn^2$, hence the real power of $q$ is $\frac{5n+1}2$.%
\footnote{In equation \cite{MR1896098}*{(1.7)}, the estimate is given in terms of a constant in a Littlewood-Paley inequality, which is no more than $ C_n t ^{\frac {n}2}$.} 
In \cite{toAppear}, Skriganov studies the mean behavior of the Discrepancy function, in terms of the shift.  Remarkably, the 
$ L ^{q}$-norms do not depend very much on the choice of the shift.

%%%%%%%%%%%%%%%%%%%%%%%%%%%%%% SECTION  SECTION SECTION
%%%%%%%%%%%%%%%%%%%%%%%%%%%%%% SECTION  SECTION SECTION 
\section{Linear Distributions} %\label{s.}
Our proofs will assume that $N$ is a power of 2. 
A standard argument then implies the theorem as stated.  
If $2^{s-1} \leq N < 2^s$,  construct a distribution with $2^s$ points in $[0, 1]^n$ with low discrepancy and take $a > 1/2$ such that the cube $[0, a]^n$ contains $N$ points from the distribution.  We get $N$ points in $[0,1]^n$ with low discrepancy by scaling those points inside $[0, a]^n$ by the factor of $1/a < 2$ in each coordinate.  See for instance the beginning of \cite{MR1896098}*{\S3}.  

Let $ U=[0,1]$. 
We shall consider  distributions $ D\subset U ^{n}$ which have the structure of a vector space over the finite field $ \mathbb F _{2}$. 
(More general finite fields can be used, but with this simplest model, the more familiar Rademacher functions reveal themselves.) 
For $ s\in \mathbb N _0$, let $ \mathbb Q (2^s) = \{m 2 ^{-s} \;:\; 0\le m < 2^s\} \subset U$.  Each $ x\in \mathbb Q (2^s)$ can be written in the form 
\begin{equation} \label{e.expand}
x= \sum_{i=1} ^{s} \xi _{i} (x) 2 ^{-s+i-1} = \sum_{i=1} ^{s} \eta _{i} (x) 2 ^{-i}
\end{equation}
with coefficients $ \xi _{i} (x)= \eta _{s-i+1} (x)  \in \mathbb F_2$ for $ 1\le i \le s$. For $ x, y \in \mathbb Q (2^s)$, and $ \alpha ,\beta \in \mathbb F_2$, define $\alpha x \oplus \beta y$ through
\begin{equation*}
\eta_i (\alpha x \oplus \beta y) = \alpha \eta_i( x) + \beta \eta_i (y)  \mod 2 \,, 
\end{equation*}
Then $ \mathbb Q (2^s)$ is a vector space over $ \mathbb F_2$ of dimension $ s$. 

In dimension $ n\ge2$ we consider $ \mathbb Q^{n} (2^s)$ and extend the definition of $ \oplus $ coordinatewise, making  $ \mathbb Q ^{n} (2^s)$ an $ ns$-dimensional vector space over $ \mathbb F_2$.

%%%%%%%%%%%%%%%%%%%%%%%%%%%%%%  DEFINITION DEFINITION DEFINITION
\begin{definition}\label{d.distribution} We say that $ D \subset  \mathbb Q ^{n} (2^s)$ is a linear distribution if $ D$ is a subspace of $  \mathbb Q^{n} (2^s)$. 
\end{definition}
%%%%%%%%%%%%%%%%%%%%%%%%%%%%%%  DEFINITION DEFINITION DEFINITION

  The inner product on $  \mathbb Q (2^s) $ 
is defined by 
\begin{equation*}
\langle x,y \rangle=\langle y,x \rangle = \sum_{i=1} ^{s} \xi _{i} (x) \xi _{s-i+1} (y)\,. 
\end{equation*}
This particular structure is dictated by the definition of Walsh functions, see \S \ref{s.walsh}. For $ X = (x_1 ,\dotsc, x_n)$ and $ Y= (y_1 ,\dotsc, y_n) $ in $  \mathbb Q ^{n} (2^s)$, we write 
\begin{equation*}
\langle X,Y \rangle=\langle Y,X \rangle = \sum_{j=1} ^{n} \langle x_j, y_j \rangle \,. 
\end{equation*}
We will frequently write vectors as capital letters  and their coordinates as lower case letters, for example 
$  X = (x_1 ,\dotsc, x_n)$, $ K = (k_1 ,\dotsc, k_n)$, $ L = (\ell _1 ,\dotsc, \ell _n)$, and  this 
convention will be used without further explanation.

For any distribution $ D$ in $  \mathbb Q  ^{n} (2^s)$, we define the \emph{dual distribution} $ D ^{\perp}$ to be the set of  
$ X\in  \mathbb Q ^{n} (2^s)$ with $ \langle X,Y \rangle=0$ for all $ Y\in D$. It follows that $ D ^{\perp}$ is a 
subspace of $  \mathbb Q  ^{n} (2^s)$, hence also a linear distribution. Furthermore, we have $ (D ^{\perp}) ^{\perp}=D$, so that $ D$ and $ D ^{\perp}$ 
are mutually dual distributions. 

Consider the  Rosenbloom-Tsfasman weight defined by 
\begin{equation}\label{e.rt}
\rho (x) = 
\begin{cases}
0, & \textup{if } x=0, \\ \max \{i \;:\; \xi _{i} (x) \neq 0\},  & \textup{if } x\neq 0 ,
\end{cases} 
\end{equation}
i.e. the index of the first non-zero binary digit in the expansion of $ x$. 
It is easy to see that these satisfy the triangle inequality on $  \mathbb Q (2^s) $. They are extended to  $  \mathbb Q ^{n} (2^s)$ by 
the formula $ \rho  (X) = \sum_{i=1} ^{n} \rho  (x_i)$ for $ X= (x_1 ,\dotsc, x_n) \in  \mathbb Q^{n} (2^s)$.
One can check that $ \rho  (X)=0$   iff $ X=0$.

If $ D$ is a linear distribution, we define its Rosenbloom-Tsfasman weight   $ \rho  (D)$ to be the minimum of   $ \rho  (X)$ over  $ X\in D - \{0\}$. 

%%%%%%%%%%%%%%%%%%%%%%%%%%%%%% REMARK REMARK REMARK
\begin{remark}\label{r:ham} 
In the works of Chen-Skriganov \cite{MR1896098} and Skriganov \cite{MR2283797}, the more familiar Hamming metric is also used  in order to gain (super) orthogonality relations for integrals of Walsh functions. In our work, as in \cite{MR2893524,MR2487691},  orthogonality is achieved by averaging over random digit shifts instead. %In the end, our entire analysis resides on the shifts, so that the Hamming metric is not needed. 
\end{remark}
%%%%%%%%%%%%%%%%%%%%%%%%%%%%%% REMARK REMARK REMARK

%%%%%%%%%%%%%%%%%%%%%%%%%%%%%% SECTION  SECTION SECTION
%%%%%%%%%%%%%%%%%%%%%%%%%%%%%% SECTION  SECTION SECTION 
\section{Walsh Functions} \label{s.walsh}

We write $ \mathbb Q (2 ^{\infty }) = \bigcup _{s\in \mathbb N _0} \mathbb Q (2^s)$.  The notion of $ \oplus$ addition 
can be defined on this set, making $ \mathbb Q (2^{\infty })$ an infinite dimensional vector space over $ \mathbb F_2$.  
Each $ \lambda \in \mathbb N _0$ can be written as 
$
\sum_{i=1} ^{\infty } \lambda _{i} (\ell ) 2 ^{i-1}
$, 
where the coefficients $ \lambda _i (\ell ) \in \mathbb F_2$ for every $ i \in \mathbb N $ and only finitely many are non-zero.  With this notation, we can 
extend the notion of $ \oplus$ to $ \mathbb N _0$: $ \ell \oplus k $ is the integer $ j$ such that for all $ i\in \mathbb N $, 
\begin{equation*}
\lambda _{i} (j) = \lambda _{i} (\ell ) + \lambda _{i} (k) \mod 2 \,.
\end{equation*}
We define the  Walsh 
functions on $ U$ by 
\begin{equation}\label{e.walsh}
w _{\ell } (x) = \operatorname {exp} \biggl(  { \pi i}  \sum_{i=1} ^{\infty } \lambda _{i} (\ell ) \eta _{i} (x) \biggr)   = (-1)^{\sum_{i=1} ^{\infty } \lambda _{i} (\ell ) \eta _{i} (x)}
\end{equation}
where $ \eta _{i} (x)$ are as in \eqref{e.expand}.  A detailed study of these functions can be found in \cites{MR1117682}.  
The set of functions $ \{w _{\ell } \;:\; \ell \in \mathbb N _0\}$ form an orthonormal basis for $ L ^2 (U)$:  for every $ f\in L ^2 (U)$
\begin{equation*}
f \simeq \sum _{\ell \in \mathbb N _0}  \langle f, w _{\ell } \rangle w _{\ell }
\end{equation*}
with $ \simeq$ indicating that the sum on the right converges to $f$ in the $ L ^2 $ metric.   It is also relevant for us that there is 
an explicit formula connecting Walsh expansions and conditional expectations. 
\begin{equation}\label{e.ce}
\sum_{ \ell =0} ^{2^s-1}  \langle f, w _{\ell } \rangle w _{\ell } 
=  2^s\sum_{t=1} ^{2^s}   \int _{(t-1) 2^{-s} } ^{t 2 ^{-s}} f (y) \; dy \cdot \mathbf 1_{[ (t-1) 2 ^{-s}, t 2 ^{-s})} \,. 
\end{equation} 
It is also the case that $ w _{\ell }$ are the  characters 
of the group $  \mathbb Q (2^{\infty })$.  In particular,  $ w _{\ell } (x \oplus y) = w _{\ell } (x) w _{\ell } (y)$, 
and  $ w _{\ell \oplus k} (x) = w _{\ell } (x) \cdot w _{k} (x) $ for all $ \ell , k \in \mathbb N _0$ and $ x,y\in U$.

\medskip 
In dimension $ n$,  the notion of $ \oplus$ can be extended coordinatewise to $ \mathbb N _0 ^{n}$ 
and likewise to $ \mathbb Q ^{n} (2^{\infty }) $. For $ L = (\ell _1 ,\dotsc, \ell _{n}) \in \mathbb N _0 ^{n}$  
and $ X   \in \mathbb Q^{n} (2^{\infty }) $, we define 
\begin{equation*}
W _{L} (X) = \prod _{j=1} ^{n} w _{\ell _j} (x_j) \,. 
\end{equation*}
The properties mentioned above continue to hold for these Walsh functions.  The collection $ \{W _{L} \;:\; L\in \mathbb N _0\}$ 
forms an orthonormal basis for $ L ^2 (U^n)$, and the $ W_L$ are group characters with respect to $ \oplus$. In particular, 
for all $ L,K\in \mathbb N _0 ^{n}$
\begin{equation*}
\langle W _{L} , W _{K} \rangle = \int _{U ^{n}} W _L   {W _{K}} \; dx 
= \int _{U^n} W _{L\ominus K} \; dx = 
\begin{cases}
1, & L=K, \\ 0, & L \neq K .
\end{cases}
\end{equation*}

There are some useful consequences of $ W _{L}$ being  the group characters, which we collect here.  Consider the vector space over 
$ \mathbb F_2$ given by 
\begin{equation*}
\mathbb N _0 ^{n}( 2^s)  := 
\{ L = (\ell _1 ,\dotsc, \ell _n) \in \mathbb N _0 ^{n} \;:\; 0 \le \ell _i  < 2^s,\ 1\le i \le n \} \,. 
\end{equation*}
Obviously, the map 
\begin{equation*}
\theta \;:\;  \mathbb Q ^{n} (2^s)  \to  \mathbb N_0 ^{n} (2^s)  \;:\; 
(x_1 ,\dotsc, x_n) \mapsto (2^s x_1 ,\dotsc, 2^s x_n)
\end{equation*}
is a vector space isomorphism. The following variant of the Poisson summation formula holds.

%%%%%%%%%%%%%%%%%%%%%%%%%%%%%% LEMMA LEMMA LEMMA
\begin{lemma}\label{l.PS} For every linear distribution $ D\subset \mathbb Q^{n} (2^s) $ and every $ L\in \mathbb N _0^{n} (2^s) $, 
it holds that 
\begin{equation*}
\sum_{X\in D} W _{L} (X) = 
\begin{cases}
\sharp D,  &  L \in \theta (D ^{\perp}),   \\  0, & L \not\in \theta (D ^{\perp}).
\end{cases}
\end{equation*}

And for every $X \in \mathbb Q^{n} (2^s) $
\begin{equation*}
\sum_{L\in \theta(D)} W _{L} (X) = 
\begin{cases}
\sharp D,  &  X \in D ^{\perp},   \\  0, & X \not\in D ^{\perp}.
\end{cases}
\end{equation*}

%%%%%%%%%%%%%%%%%%%%%%%%%%%%%% LEMMA LEMMA LEMMA

\end{lemma}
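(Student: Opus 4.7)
The plan is to reduce both identities to a standard character-sum principle on a finite abelian group: a character summed over a subgroup yields either the full cardinality (if it restricts trivially) or zero. The bridge between the Walsh functions and the subspace structure is the identity
\begin{equation*}
W_{\theta(Y)}(X) = (-1)^{\langle X,Y\rangle}, \qquad X,Y \in \mathbb Q^n(2^s),
\end{equation*}
which I would establish first. Starting from $w_\ell(x) = (-1)^{\sum_i \lambda_i(\ell)\eta_i(x)}$ and writing $y = \ell/2^s$, the relation $y = \sum_i \lambda_i(\ell)\,2^{-(s-i+1)}$ forces $\lambda_i(\ell) = \eta_{s-i+1}(y) = \xi_i(y)$. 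Substituting into the exponent gives $\sum_i \xi_i(y)\eta_i(x)$, which by the symmetry $\langle x,y\rangle = \langle y,x\rangle$ and the identity $\xi_{s-i+1}(\cdot)=\eta_i(\cdot)$ is exactly $\langle x,y\rangle \bmod 2$. Taking products over coordinates then yields the displayed formula.

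With this identity in hand, the first claim becomes
\begin{equation*}
\sum_{X\in D} W_L(X) = \sum_{X\in D} (-1)^{\langle X,Y\rangle}, \qquad Y := \theta^{-1}(L).
\end{equation*}
Since $D$ is an $\mathbb F_2$-subspace of $\mathbb Q^n(2^s)$ and $X\mapsto \langle X,Y\rangle \bmod 2$ is $\mathbb F_2$-linear, this map is either identically zero on $D$ (precisely when $Y\in D^\perp$, i.e.\ $L\in\theta(D^\perp)$, giving a sum equal to $\sharp D$) or surjective onto $\mathbb F_2$, in which case its kernel has index $2$ in $D$ and the $\pm 1$ contributions cancel exactly to give $0$.

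The second identity is then essentially the transpose: parametrizing $L = \theta(Y)$ with $Y\in D$, the same identity yields
\begin{equation*}
\sum_{L\in\theta(D)} W_L(X) = \sum_{Y\in D}(-1)^{\langle X,Y\rangle},
\end{equation*}
and the same linear-functional dichotomy applies, now with $Y$ varying in $D$ and $X$ fixed: the sum is $\sharp D$ when $\langle X,\cdot\rangle$ vanishes on $D$, i.e.\ $X\in D^\perp$, and zero otherwise.

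The only real obstacle is the bookkeeping in the first step: the definitions use two conventions for digit expansions ($\xi_i$ versus $\eta_i$) and a ``twisted'' inner product $\langle x,y\rangle = \sum \xi_i(x)\xi_{s-i+1}(y)$, and one must verify carefully that the reversal built into the inner product is exactly what turns the Walsh exponent into a symmetric bilinear pairing. Once that is in place, everything else is routine linear algebra over $\mathbb F_2$.
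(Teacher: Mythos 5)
Your proof is correct. The paper states Lemma~\ref{l.PS} without proof (it is invoked as a standard variant of the Poisson summation formula), and your argument --- first verifying the pairing identity $W_{\theta(Y)}(X)=(-1)^{\langle X,Y\rangle}$ from the digit conventions, then applying the standard dichotomy for an $\mathbb F_2$-linear functional summed over a subspace --- is exactly the intended justification, including the careful check that the reversal $\xi_i=\eta_{s-i+1}$ built into the inner product matches the Walsh exponent.
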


Using the isomorphism $\theta$, we  can define $ \rho ( \ell )$ and $ \rho (L)$.  In particular, for $ \ell \in \mathbb N _0 (2^s)$, we write 
$ \ell = \sum_{i=1} ^{s} \lambda _{i} (\ell ) p ^{i-1}$, and then $ \rho (\ell ) $ is the largest $ i$ with $ \lambda_{i} (\ell ) \neq 0$. 
We furthermore set 
\begin{equation}\label{e.zl-zt}
\lambda (\ell ) := \lambda _{\rho (\ell )} (\ell )\,, \qquad \tau  (\ell ):= \ell - \lambda (\ell ) p ^{\rho (\ell )-1} \,. 
\end{equation}
So, $ \lambda (\ell )$ is the most significant digit of $ \ell $, and $ \tau (\ell )$ is $ \ell $ less its most significant term in the 
dyadic expansion of $ \ell $ (we shall say that $\tau (\ell)$ is the \emph{truncation} of $\ell$). For $L\in \mathbb N _0^n (2^s)   $ we set
\begin{equation}
\rho (L) = \sum_{i=1}^n \rho (\ell_i),\qquad \overline{\rho} (L) = \big(\rho(\ell_1),...,\rho(\ell_n)\big),\qquad \tau (L) = \big(\tau(\ell_1),...,\tau(\ell_n)\big).
\end{equation}

%%%%%%%%%%%%%%%%%%%%%%%%%%%%%% SECTION  SECTION SECTION
%%%%%%%%%%%%%%%%%%%%%%%%%%%%%% SECTION  SECTION SECTION 
\section{Approximation of the Discrepancy Function} %\label{s.}

Let $\chi (y,\cdot)$ be  the indicator of the interval $ [0, y) \subset U$, i.e.
\begin{equation*}
\chi (y,x):= 
\begin{cases}
1  & 0\le x < y  \\ 0 & y\le x < 1 
\end{cases} \,. 
\end{equation*}
This function has Walsh expansion which we write as 
\begin{equation*}
\chi (y,x) \simeq \sum_{\ell \in \mathbb N _0} \widetilde \chi _{\ell } (y)  { w _{\ell } (x) }
\end{equation*}
where $ \widetilde \chi (y) = \langle \chi (y, \cdot ),  {w _{\ell }} \rangle = \displaystyle{\int_0^y \chi(y,x) w_\ell (x) dx}$, and in particular, $ \widetilde \chi _{0} (y)=y$. 
For $ s \in \mathbb N _0$, we truncate the Walsh expansion above to 
\begin{equation*}
\chi _{s} (y,x) = \sum_{ \ell \in \mathbb N _0 (2^s)}  \widetilde \chi _{\ell } (y)   {w _{\ell } (x)}  \,. 
\end{equation*}

This  is extended to $ n$ dimensions.  For $ X, Y\in U ^{n}$, we write 
\begin{align*}
\chi (Y,X) &:= \prod _{j=1} ^{n} \chi (y_j, x_j)\,,
\\
\chi_s(Y,X) &:= \prod _{j=1} ^{n} \chi _s (y_j, x_j)\,,
\\
\mathcal M[D;Y] & := \sum_{X\in D} \chi_s(Y,X) - 2^s \prod _{j=1} ^{n} y_j \,. 
\end{align*}
The first is the indicator of the box in $ U ^{n}$, anchored at the origin and  $ Y$; the second is a truncation of the 
Walsh expansion of the first; and the third is an approximation of the discrepancy function $ \mathcal \mathcal D_N[D,Y]$, since according to \eqref{e:discrepdef}
\begin{align*}
\mathcal D_N [D;Y] & := \sum_{X\in D} \chi (Y,X) - 2^s \prod _{j=1} ^{n} y_j \,.
\end{align*}

For $T \in \mathbb Q ^{n} (2^s)$ the digit shift $D \oplus T$ is defined as  $D \oplus T = \{ X \oplus T:\ X \in D \}$. The following  important observation of Chen and Skriganov \cite{MR1896098}*{Lemma 6A} shows that $\mathcal M[D\oplus T;Y]$ is indeed a good approximation to the discrepancy function.

%%%%%%%%%%%%%%%%%%%%%%%%%%%%%% LEMMA LEMMA LEMMA
\begin{lemma}\label{l.approx} Suppose that $ D\subset  \mathbb Q^{n} (2^s) $ is a linear distribution of $N=  2^s$ points 
with dual linear distribution $ D ^{\perp} $ satisfying the bound $ \rho (D ^{\perp}) \ge s - \delta + 1$. We then have 
\begin{equation*}
\lVert \mathcal D_N [D \oplus T,Y] -  \mathcal M[D\oplus T,Y]  \rVert_{ L ^\infty (X) } \le n 2^{\delta}  \lesssim 1 \,. 
\end{equation*}
\end{lemma}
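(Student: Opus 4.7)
The plan is to reduce the difference to $n$ one-dimensional ``boundary-strip'' counts and then use the $\rho$-weight hypothesis on $D^\perp$ to control each via a dimension count. Unwinding definitions,
\begin{equation*}
\mathcal D_N[D \oplus T, Y] - \mathcal M[D \oplus T, Y] = \sum_{X \in D \oplus T} \bigl( \chi(Y, X) - \chi_s(Y, X) \bigr),
\end{equation*}
so the task is to bound this sum uniformly in $Y$. The underlying one-dimensional observation, coming from \eqref{e.ce}, is that $\chi_s(y, \cdot)$ is the conditional expectation of $\chi(y, \cdot)$ with respect to the dyadic partition of $U$ at scale $2^{-s}$. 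Consequently, for $x \in \mathbb Q(2^s)$ the difference $\chi(y, x) - \chi_s(y, x)$ vanishes unless $x = z := 2^{-s} \lfloor 2^s y \rfloor$ is the unique grid point in the dyadic interval containing $y$, and then $|\chi(y, x) - \chi_s(y, x)| \le 1$.

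Next I would telescope coordinate by coordinate:
\begin{equation*}
\chi(Y, X) - \chi_s(Y, X) = \sum_{k=1}^n P_k(Y, X) \bigl( \chi(y_k, x_k) - \chi_s(y_k, x_k) \bigr),
\end{equation*}
where $P_k(Y, X) := \prod_{j < k} \chi(y_j, x_j) \prod_{j > k} \chi_s(y_j, x_j)$ satisfies $|P_k| \le 1$. The one-dimensional observation applied in coordinate $k$ forces the $k$-th summand to vanish unless $x_k = z_k := 2^{-s} \lfloor 2^s y_k \rfloor$, yielding
\begin{equation*}
\Bigl| \sum_{X \in D \oplus T} \bigl( \chi(Y, X) - \chi_s(Y, X) \bigr) \Bigr| \le \sum_{k=1}^n \#\{X \in D \oplus T : x_k = z_k\}.
\end{equation*}

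Each count $\#\{X \in D \oplus T : x_k = z_k\} = \#\{X \in D : x_k = z_k \oplus t_k\}$ is either empty or a coset of the subspace $V_k := \{X \in D : x_k = 0\}$, so it is bounded by $|V_k|$. By duality (Lemma~\ref{l.PS} applied to characters $W_L$ supported on the $k$-th coordinate, i.e.\ with $\ell_j = 0$ for $j \ne k$, combined with character orthogonality on $\mathbb Q(2^s)$), $|V_k| = |W_k|$, where
\begin{equation*}
W_k := \{L \in \theta(D^\perp) : \ell_j = 0 \text{ for all } j \ne k\}.
\end{equation*}
Every nonzero $L \in W_k$ satisfies $\rho(L) = \rho(\ell_k) \ge s - \delta + 1$, i.e.\ $\ell_k \ge 2^{s-\delta}$. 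Viewed as a subspace of $\mathbb F_2^s$ through its one nonzero coordinate, $W_k$ therefore intersects trivially the $(s-\delta)$-dimensional subspace $\{\ell : \ell < 2^{s-\delta}\}$; this forces $\dim W_k \le \delta$, hence $|V_k| \le 2^\delta$. Summing the $n$ terms delivers the bound $n \cdot 2^\delta$.

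The main obstacle is the duality step identifying $|V_k|$ with the one-coordinate slice $|W_k|$ of $\theta(D^\perp)$: once this bridge is in place, the $\rho$-hypothesis translates by a one-line dimension count into the required $2^\delta$ bound, and the telescoping reduction from the $n$-dimensional difference to $n$ one-dimensional boundary counts is routine.
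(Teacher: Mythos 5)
Your proof is correct. Note that the paper itself does not prove this lemma: it is quoted from Chen--Skriganov \cite{MR1896098}*{Lemma 6A}, so there is no in-paper argument to match against; your write-up supplies a complete, self-contained proof. The skeleton is the expected one: by \eqref{e.ce}, $\chi_s(y,\cdot)$ is the conditional expectation of $\chi(y,\cdot)$ at scale $2^{-s}$, so for $x\in\mathbb Q(2^s)$ the one-dimensional difference is supported on the single grid point $2^{-s}\lfloor 2^s y\rfloor$ and bounded by $1$; the telescoping product identity then reduces everything to $n$ slab counts $\#\{X\in D\oplus T: x_k=z_k\}$, each a coset count. Where you diverge from the standard treatment is in bounding that count: Chen--Skriganov would invoke the net property (each dyadic box of volume $2^{-s+\delta}$ contains exactly $2^\delta$ points, so the slab $\{x_k=z_k\}$, sitting inside such a box, has at most $2^\delta$ points), whereas you work directly from the hypothesis $\rho(D^\perp)\ge s-\delta+1$ via the duality $\#(D\cap H_k)=\#(D^\perp\cap H_k^\perp)$ and a trivial-intersection dimension count. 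Both routes are valid and essentially equivalent (the paper notes the equivalence of the two hypotheses via \cite{MR1896098}*{Lemma 2C}); yours has the merit of using only the stated assumption on $D^\perp$ and the Poisson summation machinery already set up in Lemma~\ref{l.PS}, at the cost of a slightly longer linear-algebra detour. The only step you state rather than fully justify is the identity $|V_k|=|W_k|$, but the character-orthogonality argument you gesture at (or the dimension formula $\dim(D\cap H_k)=\dim D+\dim H_k-ns+\dim(D^\perp\cap H_k^\perp)$ with $\dim D=s$) closes it in one line.
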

%%%%%%%%%%%%%%%%%%%%%%%%%%%%%% LEMMA LEMMA LEMMA    

Below, constants that only depend upon the dimension $ n$ will not be systematically tracked.  
The usefulness of this approximation is that $ \mathcal M [D \oplus T;Y]$ can be expressed by a remarkably succinct  formula. Using Poisson summation, Lemma~\ref{l.PS},  we obtain 
\begin{align}
 \mathcal M [D\oplus T;Y] 
 &= \sum_{X\in D} \sum_{L\in \mathbb N _0 (2^s) ^{n}} 
 \widetilde \chi _{L} (Y) {W _{L} (X \oplus T)} - 2^s \prod _{j=1} ^{n} y_j
 \\
 &= \sum_{L\in \mathbb N _0 (2^s) ^{n}}  \Biggl\{
  \sum_{X\in D}{W _{L} (X \oplus T)} \Biggr\} 
 \widetilde \chi _{L} (Y)  - 2^s \prod _{j=1} ^{n} y_j
 \\ \label{e.poisson}
&=   2^s\sum_{L\in \theta (D ^{\perp}) \setminus \{0\}}   W _{L} (T)\widetilde \chi _{L} (Y).
 \end{align}
since $ W _{L} (X \oplus T)= W _{L} (X) W _{L} (T)$ and $ \chi _{\overline{0}} (Y) =  \prod _{j=1} ^{n} y_j$.

Recall $ \widetilde \chi _{L} (Y) = \prod _{j=1} ^{n} \widetilde \chi_{\ell _j} (y_j)$.  
Formulas of Fine \cite{MR0032833} (later extended by Price \cite{MR0087792} to $p$-adic Walsh functions and known as Fine-Price formulas) give a precise expansion of the $ \tilde \chi _{\ell }$. 
For every $ \ell \in \mathbb N _0$,  we have
\begin{align}
%\label{e.fp0}
%u_0 (y) &= \tfrac 12  \bigl( w_0 (y)-\sum _{i=1} ^{ \infty } 2^{-i}  w _{ 2^{i-1}} (y) \bigr) 
%\\ 
\label{e.fine}
 \widetilde \chi _{\ell } (y)  = 2^{- \rho (\ell)} u _{\ell} (y), \,\, \textup{ where  } \,\, u _{\ell } (y) &= 
\tfrac 12  \bigl( w _{\tau ( \ell )} (y) - 
\sum _{i=1} ^{ \infty } 2^{-i}  w _{ \ell + 2^{\rho(\ell)+i-1}} (y) \bigr)  \,. 
\end{align}
The equality above holds for $ \ell =0$ as well, with the understanding that $ \tau (0)= \rho (0)=0$. 
 
%It follows that for $ L\in \theta (D ^{\perp})- \{0\}$ we have 
%\begin{equation}\label{e.FP}
%\widetilde \chi _{L} (Y) 
%= 2^{- \rho (L)} \prod _{j=1} ^{n} u _{\ell _j} (y_j) = 2^{-\rho(L)} U_L (Y)\,, 
% \end{equation}
% where $\| U_L \|_\infty \le 1$.

Recall that for $  x \in U := [0,1]$, we write 
$ x= \sum_{i=1} ^{\infty } \eta  _{i} 2 ^{-i}$, where $ \eta _{i} (x) \in \{0,1\}$. 
The  Rademacher functions are defined as \begin{equation}\label{e:raddef}
 r _{i} (x) = (-1) ^{\eta _i (x)}.
 \end{equation}
 In particular, $w_{2^k } = r_{k+1}$. We then have the following representation.

%%%%%%%%%%%%%%%%%%%%%%%%%%%%%% LEMMA LEMMA LEMMA
\begin{lemma}\label{l.omegaRep}  For any $\ell \in N_0$ we have
\begin{align}
 \widetilde \chi _{\ell } (y) & = 2 ^{- \rho (\ell) - 1} w _{\tau ( \ell )} (y) \omega_{\rho(\ell)}(y),\,\,\, \textup{\emph{where}}
\\ 
 \quad \omega_{\rho(\ell)}(y) & = 1  - 
\sum _{i=1} ^{ \infty } 2 ^{-i} r_{\rho(\ell)}(y) r_{\rho(\ell)+i}(y),
\end{align}
and $r_k(y)$ are the Rademacher functions.%, as defined at the beginning of \S\ref{s:lp}. 
\end{lemma}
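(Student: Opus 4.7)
The plan is to derive the lemma as a direct algebraic rearrangement of the Fine-Price formula \eqref{e.fine}, using only the multiplicativity of Walsh functions with respect to $\oplus$ and the identity $w_{2^k}=r_{k+1}$.

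First, I would examine each Walsh function $w_{\ell + 2^{\rho(\ell)+i-1}}$ appearing in \eqref{e.fine}. By the definitions of $\rho(\ell)$ and $\tau(\ell)$, we have $\ell = \tau(\ell) + 2^{\rho(\ell)-1}$, and the three indices $\tau(\ell)$, $2^{\rho(\ell)-1}$, and $2^{\rho(\ell)+i-1}$ involve pairwise disjoint binary digits (since $\tau(\ell)$ is supported in digits strictly below position $\rho(\ell)$, and $i\ge 1$ makes the third index strictly higher than $2^{\rho(\ell)-1}$). Therefore ordinary addition agrees with $\oplus$ on these terms, and
\begin{equation*}
\ell + 2^{\rho(\ell)+i-1} \;=\; \tau(\ell) \oplus 2^{\rho(\ell)-1} \oplus 2^{\rho(\ell)+i-1}.
\end{equation*}

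Next, since $\{w_j\}$ are characters of $\oplus$, I would factor
\begin{equation*}
w_{\ell + 2^{\rho(\ell)+i-1}}(y) \;=\; w_{\tau(\ell)}(y)\, w_{2^{\rho(\ell)-1}}(y)\, w_{2^{\rho(\ell)+i-1}}(y) \;=\; w_{\tau(\ell)}(y)\, r_{\rho(\ell)}(y)\, r_{\rho(\ell)+i}(y),
\end{equation*}
where in the last step I used the identity $w_{2^k}=r_{k+1}$ noted before \eqref{e:raddef}. Substituting this into \eqref{e.fine} and pulling the common factor $w_{\tau(\ell)}(y)$ out of the sum gives
\begin{equation*}
u_\ell(y) \;=\; \tfrac12\, w_{\tau(\ell)}(y)\Bigl( 1 - \sum_{i=1}^{\infty} 2^{-i} r_{\rho(\ell)}(y)\, r_{\rho(\ell)+i}(y) \Bigr) \;=\; \tfrac12\, w_{\tau(\ell)}(y)\,\omega_{\rho(\ell)}(y).
\end{equation*}
Multiplying by the prefactor $2^{-\rho(\ell)}$ from \eqref{e.fine} yields the claimed representation. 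The case $\ell=0$ is checked directly: $\tau(0)=\rho(0)=0$, the sum telescopes trivially, and both sides equal $y$ after noting that $\omega_0$ should be interpreted via the convention already adopted in \eqref{e.fine}.

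There is really no serious obstacle here; the entire content of the lemma is the algebraic identity that separates the shared factor $w_{\tau(\ell)}$ from each summand and repackages the remaining factors as two Rademacher functions via $w_{2^k}=r_{k+1}$. The only point requiring a moment of care is verifying that the integer sum $\ell + 2^{\rho(\ell)+i-1}$ coincides with the corresponding $\oplus$-sum, which follows because the binary supports are disjoint.
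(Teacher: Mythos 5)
Your proposal is correct and follows essentially the same route as the paper: both decompose $\ell+2^{\rho(\ell)+i-1}=\tau(\ell)\oplus 2^{\rho(\ell)-1}\oplus 2^{\rho(\ell)+i-1}$, factor the Walsh function via the character property and $w_{2^k}=r_{k+1}$, and substitute into the Fine--Price formula \eqref{e.fine}. Your write-up merely spells out the disjoint-binary-support justification and the $\ell=0$ convention, which the paper leaves implicit.
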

%%%%%%%%%%%%%%%%%%%%%%%%%%%%%% LEMMA LEMMA LEMMA

The function $ \omega_{\rho(\ell)}(y)$ is continuous and piecewise linear with a period of $2^{-\rho(\ell) + 1}$.

\begin{proof}
As $\ell + 2 ^{\rho(\ell)+i-1} = \tau ( \ell ) \oplus  2 ^{\rho(\ell) - 1}  \oplus 2 ^{\rho(\ell)+i-1}$ then
\begin{equation}
 w _{\ell + 2 ^{\rho(\ell)+i-1}} (y) = w_{\tau ( \ell )}(y) w_{2 ^{\rho(\ell) - 1}}(y)  w_{2 ^{\rho(\ell)+i-1}}(y) = w_{\tau ( \ell )}(y)r_{\rho(\ell)}(y) r_{\rho(\ell)+i}(y).
\end{equation}
Which along with \eqref{e.fine} proves Lemma \ref{l.omegaRep}.
\end{proof}

\begin{remark} Lemma \ref{l.omegaRep} may be explained and proved without appealing to the Fine-Price formula  \eqref{e.fine}. Indeed, the integral of a Rademacher function $\displaystyle{\int_0^y r_k (x) dx}$ is the $2^{-k+1}$-periodic ``saw-tooth" function. Hence, the integral of the Walsh function $w_\ell = r_{\rho(\ell)} \cdot w_{\tau (\ell) }$ also has this structure, but with sign changes on dyadic intervals of length $2^{-k+1}$ dictated by the sign of $w_{\tau (\ell) }$. One can easily check that on $[0,1]$ $x= \frac12 \big( 1- \sum_{i=1}^\infty 2^{-i} r_i (x) \big) $ and therefore the $1$-periodic ``saw-tooth" function $|||x|||$, i.e. the distance from $x$ to the nearest integer, satisfies 
 \begin{equation}
|||x||| =\frac12 \big( 1- \sum_{i=1}^\infty 2^{-i} r_1 (x) r_i (x)  \big) = \frac1{2^2} \big( 1- \sum_{i=1}^\infty 2^{-i} r_1 (x) r_{1+i} (x)  \big). 
\end{equation}
The rest follows by rescaling.
\end{remark}

%%%%%%%%%%%%%%%%%%%%%%%%%%%%%%%%%%%%%% 
 \section{The Rademacher Functions and Shifts} 

We say that a distribution $D$  with $N=2^s$ points is a dyadic net with deficiency $\delta$ if each dyadic box of volume $2^{-s+\delta}$ in $U^n$  contains precisely $2^\delta$ points of $D$. It is well known that  is equivalent to the fact that   $ \rho (D ^{\perp}) \ge s - \delta + 1$ (see e.g. Lemma 2C in \cite{MR1896098}). While dyadic nets with deficiency zero do not exist in dimensions $n>3$, one can construct dyadic nets with deficiency $\delta$ of the order $n\log n$ in any dimension. See the book \cite{MR2683394} for a detailed treatment of digital nets.

Assume that $D$ is a dyadic net with deficiency $\delta$ and return to formula \eqref{e.poisson}:
\begin{equation}\label{e.mShifts}
\mathcal M [D \oplus T; Y]  =  2 ^{s}\sum_{L\in \theta (D ^{\perp}) \setminus \{0\}}  W_L (T) \, \widetilde \chi _{L} (Y) 
\end{equation}  
Switch to the vector notation, setting 
$Y = (y_1, ..., y_n)$, $L = (\ell_1, ..., \ell_n)$, $\overline{\rho}(L) = (\rho(\ell_1), ... , \rho(\ell_n))$, $\omega_{\overline{\rho}(L)} (Y) =  \prod\limits_{i = 1}^n \omega_{\rho(\ell_i)} (y_i)$, and $r_{\overline{\rho}(L)} (Y) =  \prod\limits_{i = 1}^n r_{\rho(\ell_i)} (y_i)$.
Applying Lemma~\ref{l.omegaRep}  to the summands above, we obtain 
\begin{align}
W_L (T) \, \widetilde \chi _{L} (Y) &=   2 ^{- n - \rho(L)} W_L (T) W_{\tau(L)} (Y) \omega_{\overline{\rho}(L)} (Y),
\\
&=  2 ^{- n - \rho(L)} r_{\overline{\rho}(L)} (Y) W_L (T) W_L (Y) \omega_{\overline{\rho}(L)} (Y),   
\qquad  (\textup{since }W _{\tau (L)} = r_{\overline{\rho}(L)} W _{L})
\\ 
&=   2 ^{- n - \rho(L)} r_{\overline{\rho}(L)}(Y) \omega_{\overline{\rho}(L)} (Y) W_L (Y \oplus T). 
\end{align} 
Whence  we have 
\begin{equation} \label{e.Mform}
\mathcal M [D \oplus T; Y]  
=  \sum_{L\in \theta (D ^{\perp}) \setminus \{0\}} 2 ^{s - n - \rho(L)} r_{\overline{\rho}(L)}(Y) \omega_{\overline{\rho}(L)} (Y) W_L (Y \oplus T),
\end{equation}
This leads to the following consequence for the $ L ^{q}$ norms of this sum.   (In view of Lemma~\ref{l.approx}, it clearly completes the proof 
of our main theorem, Theorem \ref{t:}.  Indeed, this inequality implies that $\mathcal D_N [D\oplus T, Y]$ satisfies  the $\operatorname{exp} \big( L^{\frac2{n+1}}\big)$ bound as a function of two variables $Y$ and $T$. Therefore, for some $T$ it has to satisfy this bound in $Y$.)

%%%%%%%%%%%%%%%%%%%%%%%%%%%%%% LEMMA LEMMA LEMMA
\begin{lemma} \label{l.MEstimate}
Let the distribution $D$ with $N=2^s$ points be   a dyadic net with deficiency $\delta$. For any $1\leq q < \infty$ we have
\begin{equation}
\Vert \mathcal M [D \oplus T; Y] \Vert_{L_{q}[Y\times T]}  \leq C  q^{\frac {n+1}2}  s^{\frac{n-1}{2}}, 
\end{equation}
where the implicit constant depends only on the dimension $n$ and deficiency $\delta$.
\end{lemma}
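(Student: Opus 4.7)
My plan is to use the decoupling substitution $Z:=Y\oplus T$ together with iterated Khinchin/Littlewood--Paley inequalities, following the heuristic in the introduction: $n-1$ clean Littlewood--Paley applications (each contributing $\sqrt q$) together with one additional step that costs a full power of $q$, yielding the claimed $q^{(n+1)/2}$.

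Step 1 (decoupling). Since $(Y,T)\mapsto(Y,Z:=Y\oplus T)$ is a measure-preserving involution of $U^n\times U^n$ and $W_L(Y\oplus T)=W_L(Z)$, the $L^q(Y\times T)$ norm of \eqref{e.Mform} equals the $L^q(Y\times Z)$ norm of
\[
\tilde{\mathcal M}(Y,Z)=\sum_{\vec r\in\mathbb N_0^n} 2^{s-n-|\vec r|}\,r_{\vec r}(Y)\,\omega_{\vec r}(Y)\,P_{\vec r}(Z),
\]
where $P_{\vec r}(Z):=\sum\limits_{L\in\theta(D^\perp)\setminus\{0\},\ \overline\rho(L)=\vec r}W_L(Z)$. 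The $Y$- and $Z$-variables are now independent, the $Y$-factor depends on $L$ only through the profile $\vec r=\overline\rho(L)$, and distinct $P_{\vec r}$ sit in mutually $L^2$-orthogonal Walsh blocks.

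Step 2 (Rademacher expansion and Khinchin). Lemma~\ref{l.omegaRep} gives the identity $r_k(y)\omega_k(y)=r_k(y)-\sum_{i\ge 1}2^{-i}r_{k+i}(y)$. Expanding $r_{\vec r}(Y)\omega_{\vec r}(Y)$ coordinate-by-coordinate and reindexing by $\vec m=\vec r+\vec i$, the geometric weights telescope to yield
\[
\tilde{\mathcal M}(Y,Z)=\sum_{\vec m}C_{\vec m}(Z)\,r_{\vec m}(Y), \qquad |C_{\vec m}(Z)|\le 2^{s-n-|\vec m|}\sum_{\vec r\le\vec m}|P_{\vec r}(Z)|.
\]
In each coordinate $y_j$ the family $\{r_{m_j}(y_j)\}_{m_j}$ is an independent Rademacher system, so iterated Khinchin's inequality in $n-1$ of the $Y$-coordinates accumulates $q^{(n-1)/2}$ cleanly; the remaining direction (combined with the interaction with the Walsh-block structure of the $P_{\vec r}$ in $Z$) is estimated by a hypercontractive / Walsh-block-type bound that necessarily costs a full power of $q$, because the $\omega_k$-tails generate a second-order chaos for which no $\sqrt q$ improvement is possible. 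Combined, these steps produce the factor $q^{(n+1)/2}$.

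Step 3 (summing over profiles). After reducing to an $L^2$/Parseval expression via Minkowski in $Z$ (valid since $q\ge 2$), we use $\|P_{\vec r}\|_{L^2(Z)}^2=N_{\vec r}:=\#\{L\in\theta(D^\perp):\overline\rho(L)=\vec r\}\le 2^{|\vec r|-n}$. The dyadic-net hypothesis $\rho(D^\perp)\ge s-\delta+1$ forces $|\vec r|\ge s-\delta+1$ for every contributing profile, and there are $\binom{k+n-1}{n-1}\lesssim k^{n-1}$ profiles at each level $|\vec r|=k$. The geometric series $\sum_{\vec r}2^{2(s-n-|\vec r|)}N_{\vec r}$ then sums to a constant times $s^{n-1}$, whose square root gives the factor $s^{(n-1)/2}$.

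The principal obstacle is to identify and justify the precise step at which the extra power of $q$ (rather than $\sqrt q$) is unavoidable: each clean Khinchin/Littlewood--Paley application yields only $\sqrt q$, so one must pinpoint a second-order Rademacher chaos, arising from the interplay of the $\omega_k$-tails with the Walsh-block structure of $P_{\vec r}$, where no such improvement is possible. Careful bookkeeping of the geometric weights $2^{-i}$ from the $\omega$-expansion (to avoid paying an extra logarithm) and of the profile combinatorics is the other technical subtlety.
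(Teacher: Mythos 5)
Your overall architecture --- decouple $Y$ and $T$, expand $r_{\overline\rho}\omega_{\overline\rho}$ into a Rademacher series with geometric weights, apply Khinchin/Littlewood--Paley in the Rademacher variable, and count dual points via $\rho(D^\perp)\ge s-\delta+1$ --- is the same as the paper's. But Step 3 contains a genuine gap. After Khinchin in $Y$ you are left with $\bigl\lVert\bigl(\sum_{\vec m}|C_{\vec m}(Z)|^2\bigr)^{1/2}\bigr\rVert_{L^{2q}(Z)}$, and Minkowski's inequality for the $L^{q}$ norm of $\sum_{\vec m}|C_{\vec m}|^2$ bounds this by $\bigl(\sum_{\vec m}\lVert C_{\vec m}\rVert_{L^{2q}(Z)}^2\bigr)^{1/2}$: it requires the $L^{2q}$ norms of the coefficient functions, not their $L^2$/Parseval norms, and the inequality you would need to pass to $L^2$ goes the wrong way. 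This is not a technicality, because $P_{\vec r}$ is extremely spiky: by Poisson summation (Lemma~\ref{l.PS}), $\sum_{L\in\Lambda(\overline\rho)}W_L(Z)$ is a single character times $N_{\overline\rho}\cdot\mathbf 1_E$ with $|E|=N_{\overline\rho}^{-1}$, so $\lVert P_{\vec r}\rVert_{L^{2q}}=N_{\overline\rho}^{1-1/(2q)}$, far larger than $\lVert P_{\vec r}\rVert_{L^2}=N_{\overline\rho}^{1/2}$. Had the $L^2$ computation been legitimate, your argument would deliver $q^{n/2}s^{(n-1)/2}$, which is stronger than the lemma and essentially stronger than anything currently known --- a sign that a step is invalid. (Also, the correct count is $N_{\overline\rho}\le 2^{|\overline\rho|-s+\delta}$, not $2^{|\vec r|-n}$.)

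Relatedly, Step 2 misidentifies the source of the extra power of $q$. It does not come from a ``second-order chaos'' in the $\omega$-tails: those are disposed of by the triangle inequality over $\overline\imath$ with the summable weights $2^{-|\overline\imath|}$ and cost only a constant. In the paper one first stratifies by the level $k=|\overline\rho(L)|$ (this is also what lets the hyperbolic Littlewood--Paley inequality, Lemma~\ref{l:hyperbolic}, cost only $[C\sqrt q\,]^{\,n-1}$ rather than $q^{n/2}$, since fixing $|\overline\rho|=k$ removes one degree of freedom); the two-valued structure of $\delta(\overline\rho,\cdot)$ gives $\int\delta^{2q}\,dY=(\sharp\Lambda_0(\overline\rho))^{2q-1}$, which translates into a factor $2^{(s-k)/(2q)}$ for the level-$k$ piece; and the final geometric sum $\sum_{k>s-\delta}2^{(s-k)/(2q)}\simeq q$ over the roughly $(n-1)s$ levels is exactly where the additional full power of $q$ enters. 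If you replace your $L^2$/Parseval step by the $L^{2q}$ computation for $P_{\vec r}$ (equivalently, for $\delta(\overline\rho,\cdot)$), the exponents $2^{2(s-k)}N_{\overline\rho}^{2-1/q}=2^{(s-k)/q+O(\delta)}$ reproduce this same geometric sum and you recover the paper's bound; as written, the proposal does not close.
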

%%%%%%%%%%%%%%%%%%%%%%%%%%%%%% LEMMA LEMMA LEMMA

\begin{proof}
It is  convenient to prove the lemma for $ q$ replaced by $ 2q$, with $ q \in \mathbb N $.  
The following  elementary fact will be used: 
for an integrable function $f: U ^{n} \rightarrow \mathbb{R}$  and fixed $Z \in U^n$ we have
\begin{equation} \label{e:Y+}
\int_{U^n} f(Y) dY = \int_{U^n} f(Y \oplus Z) dY.
\end{equation}
According to it, it suffices to estimate the $  L ^{2q}[Y \times T]$ norm of $  \mathcal M [D \oplus T; Y \oplus T]$. 

The latter has a more symmetric expansion.   From \eqref{e.Mform} we get
\begin{align} \label{e.mShifts1}
\mathcal M [D \oplus T; Y \oplus T] & =  \sum_{L\in \theta (D ^{\perp}) \setminus \{0\}} 2 ^{s - n - \rho(L)} r_{\overline{\rho}(L)} (Y \oplus T) \omega_{\overline{\rho}(L)} (Y \oplus T) W_{L} (Y), 
\\
\intertext{by grouping the summands in \eqref{e.mShifts1} which have the same $\overline{\rho}(L)$ we obtain}
\label{e.mShifts2}
&=  \sum_{\overline{\rho} \in \mathbb N (2^s) ^{n},\ |\overline{\rho}| > s - \delta} 2 ^{s - n - | \overline{\rho} |} r_{\overline{\rho}} (Y \oplus T)  \omega_{\overline{\rho}} (Y \oplus T) \sum_{L\in \Lambda(\overline{\rho})} W_{L} (Y) , 
\end{align}
where $|\overline{\rho}| = \rho_1+...+\rho_n$ is the $\ell^1$ norm of $\overline{\rho}$ and $\Lambda(\overline{\rho}) = \{ L\in \theta (D ^{\perp}): \ \overline{\rho}(L) = \overline{\rho} \}$.
The latter  is an affine copy of the subspace  
\begin{equation}
\Lambda_0(\overline{\rho}) = \{ L\in \theta (D ^{\perp}): \ \overline{\rho}(L) < \overline{\rho} \}\,. 
\end{equation}
The cardinality of $\Lambda_0(\overline{\rho})$ satisfies 
 \begin{equation}
\sharp \Lambda_0(\overline{\rho}) \leq 2^{|\overline{\rho}| - s + \delta}.
\end{equation}
To see this, observe that  $\Lambda_0(\overline{\rho}) $ is 
$ \theta (D ^{\perp})$ restricted to a  dyadic box of area $2^{|\overline{\rho}|}$. %Since $ \rho (D ^{\perp}) \ge s - \delta + 1$, by definition, every dyadic box of the form $[0, 2^{a_1})\times ... \times [0,2^{a_n})$ with volume $2^{s-\delta}$ contains no points of $\theta (D^\perp)$ other than $0$. 
Divide the box %$ \{ L\in \mathbb N_0^n (2^s): \ \overline{\rho}(L) < \overline{\rho} \}$ of volume $2^\rho$ 
into $2^{|\overline{\rho}| - s + \delta}$ congruent boxes of volume $2^{s-\delta}$. Since $ \rho (D ^{\perp}) \ge s - \delta + 1$, each such box  contains no more than one point of $\theta (D^\perp)$ (for otherwise the difference of the two points would yield a non-zero point of $L = L_1 \ominus L_2 \in \theta(D^\perp)$ with  $\rho (L) \le s-\delta$). \\

The sum $\sum\limits_{L\in \Lambda(\overline{\rho}) } W_{L} (Y)$  can be written using the Poisson summation formula Lemma \ref{l.PS}. 
\begin{equation} \label{e.rhoSum}
\sum_{L\in \Lambda(\overline{\rho})} W_{L} (Y) =  W_{L_{\overline{\rho}}} (Y) \sum_{L\in \Lambda_0(\overline{\rho}) } W_{L} (Y) = %c_{\overline{\rho}} 2^{ |\overline{\rho}|-s} 
W_{L_{\overline{\rho}}} (Y) \delta(\overline{\rho}, Y),
\end{equation}
 where %$0 < c_{\overline{\rho}} \leq 2^{\delta}$,   
 $L_{\overline{\rho}}$ is any point in $\Lambda(\overline{\rho})$ and
\begin{equation} \label{e.deltaPerp}
 \delta(\overline{\rho}, Y) = 
\begin{cases}
\sharp \Lambda_0(\overline{\rho}),  &  Y \perp  \{X \in D ^{\perp}:   \ \overline{\rho}(X) < \overline{\rho} \},   \\  
0, & \textup{otherwise}.
\end{cases}
\end{equation}
The orthogonality condition in \eqref{e.deltaPerp} makes sense if we consider $Y \in \mathbb Q^{n} (2 ^{s}) $ by truncating the extra  binary digits (above $s^{th}$) in each coordinate. 
  We can easily see that 

%Because $\{X \in D ^{\perp}:   \ \overline{\rho}(X) < \overline{\rho} \}$ is a subspace of $D ^{\perp}$ over the field $F_2$ of dimension $|\overline{\rho}| - s$ then the number of $Y \in \mathbb Q^{n} (2 ^{s}) $ perpendicular to it is $2^{ns} / 2^{|\overline{\rho}| - s}$ which implies that 
\begin{equation}  \label{e.deltaInt}
\int_{[0,1]^n} \delta(\overline{\rho}, Y) dY = \sharp \Lambda_0(\overline{\rho}) \cdot \sharp \big[ \Lambda_0(\overline{\rho})\big]^\perp \cdot 2^{-ns} = 1 .
\end{equation}  

Combining \eqref{e.mShifts2} and \eqref{e.rhoSum} we obtain
\begin{align}
\mathcal M [D \oplus T; Y \oplus T] & =   \sum_{\overline{\rho} \in \mathbb N (2^s) ^{n} ,\ |\overline{\rho}| > s}   2 ^{s - n - | \overline{\rho} |} r_{\overline{\rho}} (Y \oplus T)  \omega_{\overline{\rho}} (Y \oplus T)  W_{L_{\overline{\rho}}} (Y) \delta(\overline{\rho}, Y) 
\\  \label{e:AA}
& =  2^{- n } \sum_{k=s - \delta + 1}^{ns} M_k(T, Y) \,, 
\\
\textup{where} \quad 
M_k (T, Y) & := \sum_{\overline{\rho} \in \mathbb N (2^s) ^{n} ,\ |\overline{\rho}| = k}  2 ^{s  -k} \, r_{\overline{\rho}} (Y \oplus T)  \omega_{\overline{\rho}} (Y \oplus T)  W_{L_{\overline{\rho}}} (Y) \delta(\overline{\rho}, Y).
\end{align}

The variables $ Y$ and $ T$ can be decoupled.   By \eqref{e:Y+},  the  $L_{2q}[Y \times T]$ norm of $ M_k [D \oplus T; Y \oplus T]$  equals the $L_{2q}[Y \times T]$ norm of 
\begin{equation*}
M_k'(T, Y) = \sum_{\overline{\rho} \in \mathbb N (2^s) ^{n} ,\ |\overline{\rho}| = k} 2 ^{s  -k} \,  r_{\overline{\rho}} (T)  \omega_{\overline{\rho}} (T)  W_{L_{\overline{\rho}}} (Y) \delta(\overline{\rho}, Y) \,. 
\end{equation*}
Let us rewrite the function $ r_{\overline{\rho}} (T)  \omega_{\overline{\rho}} (T) $.   Using Lemma \ref{l.omegaRep}, since $r_m^2 = 1$, we have 
\begin{equation}
r_{m}(t) \omega_{m}(t) = r_{m}(t)  - 
\sum _{i=1} ^{ \infty } 2 ^{-i} r_{m+i}(t) .
\end{equation}
This implies that 
\begin{equation}
r_{\overline{\rho}} (T)  \omega_{\overline{\rho}} (T) = \sum_{\overline{\imath} \in \mathbb{N}_0^n} \epsilon_{\overline\imath} 2^{-| \overline{\imath} |} r_{ \overline{\rho} +  \overline{\imath}} (T).
\end{equation}
Here, $\epsilon_{\overline\imath} $ is $ -1$ raised to the number of non-zero entries of $ \overline \imath$.  
Therefore 
\begin{align} \label{e.MkPrime}
M_k'(T, Y) &= \sum_{\overline{\imath} \in \mathbb{N}_0^n} \epsilon_{\overline\imath} 2^{-| \overline{\imath} |}  \sum_{\overline{\rho} \in \mathbb N (2^s) ^{n} ,\ |\overline{\rho}| = k} 2 ^{s  - k} \,  r_{ \overline{\rho} +  \overline{\imath}}  (T) W_{L_{\overline{\rho}}} (Y) \delta(\overline{\rho}, Y) \\
&=: \sum_{\overline{\imath} \in \mathbb{N}_0^n} \epsilon_{\overline\imath} 2^{-| \overline{\imath} |} M_k^{\overline{\imath}}(T, Y). 
\end{align}

We estimate the $ L ^{2q} (Y \times T)$ norm of $ M_k^{\overline{\imath}}(T, Y)$, which the 
Littlewood--Paley inequalities are ideally suited for.   Applying Lemma~\ref{l:hyperbolic} in $T$ and using the fact that $ q\in \mathbb N $, we obtain   
\begin{align} 
\Vert M_k^{\overline{\imath}}(T, Y) \Vert_{L_{2q}[T \times Y]}^{2q} &\leq (Cq)^{q(n-1)}\,  2 ^{(s  -k) 2q} \, \int \left( \sum_{\overline{\rho} \in \mathbb N (2^s) ^{n} ,\ |\overline{\rho}| = k} \delta^2(\overline{\rho}, Y) \right)^q \; dY \\
&= (Cq)^{q(n-1)} \,2 ^{(s  -k) 2q}  \int \sum_{\lvert  \overline{\rho}_1\rvert ,\dotsc, \lvert \overline{\rho}_q\rvert = k} \delta^2 (\overline{\rho}_1, Y) \cdots  \delta^2 (\overline{\rho}_q, Y)\; dY  \\
& \le 
(Cq)^{q(n-1)}\, 2 ^{(s  - k) 2q}    \sum_{\lvert  \overline{\rho}_1\rvert ,\dotsc, \lvert \overline{\rho}_q\rvert = k} 
 \prod_{j=2}^q  \big[  \sharp \Lambda_0(\overline{\rho}_j)\big]^2  \cdot  \int \delta^2 (\overline{\rho}_1, Y)  \; dY 
\label{e.lpM}  \\
& \le 
(Cq)^{q(n-1)}\,  2 ^{(s  -k ) 2q}  \,  s^{q(n-1)} \,   2^{(k  - s + \delta)(2q-1)}    \\
 & \leq    (Cq)^{q(n-1)} \, s^{q(n-1)} \, 2^{  s - k  }.
\end{align}
The constant $ C$ changes from line to line above.  The first line is the Littlewood-Paley inequality; the third one uses the fact that $ \delta (\overline \rho ,Y)$ takes values  $ 0$ and $ \sharp \Lambda_0(\overline{\rho})$; the fourth one uses the facts that the number of $ \overline \rho $ with $ \lvert  \overline \rho \rvert=k $ 
is at most $ C k ^{n-1} < C' s ^{n-1}$, that $ \sharp \Lambda_0(\overline{\rho}) \leq 2^{|\overline{\rho}| - s + \delta}$, and that   $$ \int \delta^2 (\overline{\rho}, Y)  \; dY =  \sharp \Lambda_0(\overline{\rho}) \cdot  \int \delta  (\overline{\rho}, Y)  \; dY =   \sharp \Lambda_0(\overline{\rho}) \leq 2^{|\overline{\rho}|  - s + \delta}$$ in view of   \eqref{e.deltaInt}.    

%\leq 2^{k  - s + \delta}

Because of the geometric decay in  \eqref{e.MkPrime} we get
\begin{equation}  
\Vert M_k'(T, Y) \Vert_{L_{2q}[T \times Y]} \leq C q^{\frac{n-1}{2}}  s^{\frac{n-1}{2}}  2^{\frac{s - k   }{2q}} .
\end{equation}
Note in particular the exponent of $ 2$ above, which will lead to one additional power of $ q$ in our estimate. 
Recall that $\Vert M_k(T, Y) \Vert_{L_{2q}[T \times Y]} = \Vert M_k'(T, Y) \Vert_{L_{2q}[T \times Y]}$, thus from \eqref{e:AA}  we obtain
\begin{align} 
\Vert \mathcal M [D \oplus T; Y \oplus T] \Vert_{L_{2q}[T \times Y]}  &\leq C  q^{\frac{n-1}{2}}  s^{\frac{n-1}{2}}   \sum_{k=s+1}^{ns} 2^{\frac{s - k }{2q}}  \\
\label{e:fullPower}
  &\leq C  q^{\frac{n+1}{2}}  s^{\frac{n-1}{2}}    \leq C  q^{\frac{n+1}{2}}  s^{\frac{n-1}{2}} 
\end{align}
This completes the proof. 
\end{proof}

The reader interested in further improvements in arguments of this type will quickly focus on the fact that this method of proof 
uses the Rademacher structure, but exploits very little information (essentially just \eqref{e.deltaInt}) about the coefficients of the Rademacher functions.
The first  point where one would like to do much better is  estimate \eqref{e.lpM} above: here the integral of the $ q$-fold product of $ \delta (\overline  \rho ,Y) $ 
is estimated by the integral of a single $ \delta (\overline  \rho ,Y)$.   
However we have only found incremental improvements on this point  and we leave the topic to the future.  

At this point, it might be convenient to point out why Conjecture~\ref{j:} represents a natural goal, and why the 
possible extensions are far from clear.  For integers $ k$, one has 
\begin{equation*}
\Bigl\lVert \sum_{\overline  \rho \::\: \lvert  \overline \rho \rvert=k } r _{\overline \rho } \Bigr\rVert_{k} 
\gtrsim k ^{ (n-1) - \frac n k}, 
\end{equation*}
since on the cube $ [0, 2 ^{-k}] ^{n}$, the summands are all of the same sign.  On the other hand, the Littlewood-Paley immediately show that 
$ \Bigl\lVert \sum_{\overline  \rho \::\: \lvert  \overline \rho \rvert=k } r _{\overline \rho } \Bigr\rVert_{ \operatorname {exp}(L ^{\frac 2 {n-1}})} \lesssim k ^{\frac {n-1}2}$, which by the above is not improvable.

\section{The Littlewood--Paley Inequalities} \label{s:lp}

%For $  x \in U := [0,1]$, write 
%$ x= \sum_{i=1} ^{\infty } \eta  _{i} 2 ^{-i}$, where $ \eta _{i} (x) \in \{0,1\}$. 
%The  rademacher functions are then $ r _{i} (x) = (-1) ^{\eta _i (x)}$.   
We start with the following  version of the Littlewood--Paley inequalities (which is just the Hilbert space-valued Khinchin inequality):

%%%%%%%%%%%%%%%%%%%%%%%%%%%%%% LEMMA LEMMA LEMMA
\begin{lemma}\label{l:lp} For coefficients $ c _{i} $ in a Hilbert space $ \mathcal H$ and for any $q\ge2$, there holds 
\begin{equation*}
\Bigl\lVert \sum_{i} c_i r _{i} \Bigr\rVert_{L _{q} (U)} 
\le C \sqrt q   \Bigl[\sum_{i} \lvert  c_i \rvert ^2   \Bigr] ^{\frac{1}{2}}   \,,
\end{equation*}
where $r_i$ are the Rademacher functions as defined in \eqref{e:raddef}.
\end{lemma}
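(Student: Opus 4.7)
The plan is to establish Lemma~\ref{l:lp} by reducing the Hilbert-space valued statement to the scalar Khinchin inequality and then proving the scalar version with the sharp $\sqrt q$ dependence (which is precisely the feature that drives the $\operatorname{exp}(L^\alpha)$ conclusion). Concretely, fix an orthonormal basis $\{e_k\}$ of $\mathcal H$ and write $c_i = \sum_k c_{i,k} e_k$ with $c_{i,k} = \langle c_i, e_k\rangle$. Then with $S := \sum_i c_i r_i$ one has $|S|_{\mathcal H}^2 = \sum_k \bigl(\sum_i c_{i,k} r_i\bigr)^2$, and since $q\ge 2$ the triangle inequality in $L^{q/2}$ (i.e.\ Minkowski) gives
\begin{equation*}
\bigl\lVert |S|_{\mathcal H}\bigr\rVert_{L^q(U)}^2 \;=\; \Bigl\lVert \sum_k S_k^2\Bigr\rVert_{L^{q/2}(U)} \;\le\; \sum_k \lVert S_k^2\rVert_{L^{q/2}(U)} \;=\; \sum_k \lVert S_k\rVert_{L^q(U)}^2,
\end{equation*}
where $S_k := \sum_i c_{i,k} r_i$ is a scalar Rademacher sum. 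Applying the scalar Khinchin inequality to each $S_k$ and then Parseval $\sum_k |c_{i,k}|^2 = |c_i|_{\mathcal H}^2$ collapses the double sum to $\sum_i |c_i|_{\mathcal H}^2$, completing the reduction.

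For the scalar case I would use the subgaussian/Hoeffding route, which is cleaner than the combinatorial moment computation and yields the sharp constant directly. Normalising $\sigma^2 := \sum_i c_i^2$, independence of the $r_i$'s gives
\begin{equation*}
\mathbb E\bigl[e^{tS_k}\bigr] \;=\; \prod_i \cosh(t\, c_{i,k}) \;\le\; \prod_i e^{t^2 c_{i,k}^2/2} \;=\; e^{t^2 \sigma_k^2 /2},
\end{equation*}
using the elementary bound $\cosh u \le e^{u^2/2}$. Markov's inequality then yields the subgaussian tail $\mathbb P(|S_k|>\lambda)\le 2e^{-\lambda^2/2\sigma_k^2}$, and a layer-cake integration
\begin{equation*}
\mathbb E|S_k|^q \;\le\; q\int_0^\infty \lambda^{q-1}\, 2 e^{-\lambda^2/2\sigma_k^2}\, d\lambda \;=\; q\, (2\sigma_k^2)^{q/2}\,\Gamma(q/2)
\end{equation*}
followed by Stirling $\Gamma(q/2)^{1/q}\lesssim \sqrt{q/2}$ produces $\lVert S_k\rVert_{L^q}\le C\sqrt q\, \sigma_k$, which is exactly what is needed.

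The main obstacle — more of a delicate point than a real difficulty — is ensuring the optimal $\sqrt q$ growth (rather than a worse $q$-dependence), since any loss here would propagate through the iterated Littlewood--Paley application in Lemma~\ref{l.MEstimate} and destroy the exponential Orlicz estimate. The Hoeffding argument gives exactly this sharp rate because it encodes the subgaussian character of Rademacher sums, and the passage to Hilbert-space-valued coefficients via Minkowski costs nothing extra precisely because $q\ge 2$.
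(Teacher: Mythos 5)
Your proof is correct. Note that the paper itself offers no proof of this lemma: it is stated as a known fact, described as ``just the Hilbert space-valued Khinchin inequality,'' so there is nothing in the text to compare against step by step. Your two-stage argument is the standard and complete derivation: the reduction to the scalar case via the triangle inequality in $L^{q/2}$ (legitimate precisely because $q\ge 2$) together with Parseval in $\mathcal H$ is exactly how the vector-valued statement is usually obtained, and the scalar bound via $\cosh u\le e^{u^2/2}$, the optimized Chernoff tail $2e^{-\lambda^2/(2\sigma_k^2)}$, layer-cake integration, and Stirling for $\Gamma(q/2)^{1/q}\lesssim\sqrt q$ delivers the sharp $\sqrt q$ growth that the rest of the paper (the iterated application in Lemma~\ref{l:hyperbolic} and the $L^{2q}$ bookkeeping in Lemma~\ref{l.MEstimate}) genuinely requires. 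The only blemishes are cosmetic: you define $\sigma^2:=\sum_i c_i^2$ but then work with $\sigma_k^2=\sum_i c_{i,k}^2$, and for infinite index sets one should add the routine remark that the uniform bound on finite partial sums passes to the limit. Neither affects correctness.
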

%%%%%%%%%%%%%%%%%%%%%%%%%%%%%% LEMMA LEMMA LEMMA

There is a hyperbolic extension  of this inequality that we will need.  
For $ X = (x_1 ,\dotsc, x_n) \in U ^{n}$ and $ I = (i_1 ,\dotsc, i_n) \in \mathbb N ^{n}$, set 
\begin{equation*}
r _{I} (X) = \prod _{t=1} ^{n} r _{i_t} (x_t) \,. 
\end{equation*}

%%%%%%%%%%%%%%%%%%%%%%%%%%%%%% LEMMA LEMMA LEMMA
\begin{lemma}\label{l:hyperbolic} For coefficients $ c _{I} \in \mathbb R  $, for any $ k\in \mathbb N $, 
and $ K\in \mathbb N ^{n}$  
\begin{equation*}
\Bigl\lVert \sum_{I  \in \mathbb N ^{n} \;:\; \lvert  I\rvert = k } c_I r_{I+K} (X)\Bigr\rVert_{L _q (U ^{n})} 
\le [C \sqrt q] ^{n-1} 
\Biggl[ \sum_{I \;:\; \lvert  I\rvert = k } \lvert  c_I\rvert ^2    \Biggr] ^{\frac{1}{2}}  \,. 
\end{equation*}
\end{lemma}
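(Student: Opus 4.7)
The plan is to induct on the dimension $n$, applying the Hilbert space-valued Khinchin inequality (Lemma~\ref{l:lp}) exactly once per step and bridging to the scalar $L_q$ setting via Minkowski's inequality. The shift $K$ plays no essential role, since $\{r_{i+k_t}\}_{i \in \mathbb N}$ is just a relabelling of a subsystem of the Rademacher system, so it suffices to handle the case $K=0$.

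For the base case $n=1$ the constraint $|I|=k$ singles out the unique index $i_1=k$, leaving a single term $c_k r_k(x_1)$ of $L_q$-norm $|c_k|$, which matches the factor $[C\sqrt{q}]^{0}$. For the inductive step from $n-1$ to $n$, I would split the sum according to the last coordinate $i_n=j$:
\begin{equation*}
F(X) := \sum_{|I|=k} c_I\, r_I(X) \;=\; \sum_{j} r_j(x_n)\, g_j(X'), \qquad g_j(X') := \sum_{\substack{I' \in \mathbb N^{n-1} \\ |I'|=k-j}} c_{(I',j)}\, r_{I'}(X'),
\end{equation*}
where $X' = (x_1, \dots, x_{n-1})$. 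Holding $X'$ fixed, Lemma~\ref{l:lp} applied in $x_n$ yields the pointwise bound
\begin{equation*}
\bigl\lVert F(X', \cdot)\bigr\rVert_{L_q(dx_n)} \;\le\; C\sqrt{q}\, \Bigl( \sum_{j} |g_j(X')|^2 \Bigr)^{1/2}.
\end{equation*}
Taking the $L_q(dX')$ norm of both sides, the right-hand side equals $C\sqrt{q}\, \bigl\lVert \sum_j |g_j|^2 \bigr\rVert_{L_{q/2}(dX')}^{1/2}$, and Minkowski's inequality in $L_{q/2}$ (valid because $q \ge 2$) upgrades this to
\begin{equation*}
\lVert F \rVert_{L_q(U^n)} \;\le\; C\sqrt{q}\, \Bigl( \sum_j \lVert g_j \rVert_{L_q(dX')}^2 \Bigr)^{1/2}.
\end{equation*}
Each $g_j$ is itself a hyperbolic Rademacher sum in $n-1$ variables at level $k-j$, so the inductive hypothesis gives $\lVert g_j \rVert_{L_q(dX')} \le [C\sqrt{q}]^{n-2}\bigl(\sum_{|I'|=k-j} |c_{(I',j)}|^2\bigr)^{1/2}$. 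Substituting and collapsing the resulting double sum into $\sum_{|I|=k} |c_I|^2$ produces the claimed bound $[C\sqrt{q}]^{n-1}\bigl(\sum_{|I|=k}|c_I|^2\bigr)^{1/2}$.

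The fact that the exponent is $n-1$ rather than $n$ is structural: the hyperbolic cross condition $|I|=k$ consumes one degree of freedom, which is absorbed into the trivial $n=1$ base case where no Khinchin application is needed. I do not anticipate any genuine obstacle here; the only step worth flagging is the Minkowski exchange used to transfer Lemma~\ref{l:lp} from the Hilbert-valued to the scalar setting inside an ambient $L_q$ norm, but this is a standard and routine device for $q \ge 2$.
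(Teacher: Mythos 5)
Your proof is correct, and it delivers the same $[C\sqrt q]^{n-1}$ bound by the same underlying mechanism: one application of Khinchin per coordinate, with the hyperbolic constraint $\lvert I\rvert=k$ collapsing the last coordinate so that only $n-1$ applications are needed. The technical route differs from the paper's, though. The paper peels off the \emph{first} coordinate and invokes Lemma~\ref{l:lp} in its Hilbert-space-valued form: after one application the remaining expression is the $L_{q}$ norm of an $\ell^2$-valued Rademacher series in $n-1$ variables, to which the vector-valued Khinchin inequality applies again, the Hilbert space simply growing at each stage; no exchange of norms is ever needed. You instead run a genuine induction on $n$, applying only the \emph{scalar} Khinchin inequality pointwise in the last variable and then using the triangle inequality in $L_{q/2}$ (your Minkowski step, valid since $q\ge 2$) to pass from $\bigl\lVert \sum_j \lvert g_j\rvert^2\bigr\rVert_{L_{q/2}}^{1/2}$ to $\bigl(\sum_j \lVert g_j\rVert_{L_q}^2\bigr)^{1/2}$ before invoking the inductive hypothesis. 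Your version is marginally more elementary in that it never needs the Hilbert-space-valued statement, at the cost of one norm interchange per step; the paper's version is the more standard ``product harmonic analysis'' recursion and keeps everything inside a single chain of pointwise square-function estimates. Both yield the same constant, and your base case and the bookkeeping of the double sum $\sum_j\sum_{\lvert I'\rvert=k-j}$ are handled correctly.
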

%%%%%%%%%%%%%%%%%%%%%%%%%%%%%% LEMMA LEMMA LEMMA

%%%%%%%%%%%%%%%%%%%%%%%%%%%%%% PROOF PROOF PROOF
\begin{proof}
The point of the estimate is that we need only apply the Littlewood--Paley $ n-1$ times. 
That we can do so recursively, follows from the Hilbert space structure associated with square functions.

Indeed, apply the Littlewood--Paley inequality in the first coordinate only. We have 
\begin{align*} 
\int _{U} \Bigl\lvert  \sum_{I \;:\; \lvert  I\rvert = k } c_I r_{I+K} (X) \Bigr\rvert^q \; d x_1 
& 
\le [C \sqrt q ] ^{q} 
\Biggl[
\sum_{t \in \mathbb N }\,\,  \Bigl\lvert 
\sum_{\substack{I' \;:\; \lvert  I'\rvert = k -t }}  
 c _{(t,I')} r_{ I'+ K'} (X')
\Bigr\rvert ^2 
\Biggr] ^{\frac{q}{2}}  
\end{align*} 
On the right, we set $ K'= (k_2 ,\dotsc, k_n)$, and similarly for $ I'$ and $ X'$. 
Note that the length of $ I'$ is prescribed to be $ k-t$, 
and as well, that the sum on the right is a Hilbert space ($\ell^2$) norm of a Hilbert space-valued Rademacher series in $n-1$ variables.  
In particular, the Littlewood-Paley inequalities apply to the sum on the right etc.  
Moreover, since the length of the vectors $ I$ is fixed, in $ n-1$ applications the process terminates. 

Such arguments are common in \emph{product harmonic analysis} and have been used in the context of discrepancy in  e.g. \cite{MR2414745}, see also \cite{MR2817765}.
\end{proof}
%%%%%%%%%%%%%%%%%%%%%%%%%%%%%% PROOF PROOF PROOF

\begin{bibsection}
\begin{biblist}
\bib{MR2817765}{article}{
  author={Bilyk, Dmitriy},
  title={On Roth's orthogonal function method in discrepancy theory},
  journal={Unif. Distrib. Theory},
  volume={6},
  date={2011},
  number={1},
  pages={143--184},
  %issn={1336-913X},
  review={\MR {2817765 (2012e:11141)}},
}

\bib{1207.6659}{article}{
  author={Bilyk, Dmitriy},
  author={Lacey, Michael T.},
  title={The Supremum Norm of the Discrepancy Function: Recent Results and Connections},
  eprint={http://www.arxiv.org/abs/1207.6659},
  journal={Proceedings of MCQMC 2012},
}

\bib{MR2414745}{article}{
  author={Bilyk, Dmitriy},
  author={Lacey, Michael T.},
  title={On the small ball inequality in three dimensions},
  journal={Duke Math. J.},
  volume={143},
  date={2008},
  number={1},
  pages={81--115},
  %issn={0012-7094},
  review={\MR {2414745 (2009h:42003)}},
  %doi={10.1215/00127094-2008-016},
}

\bib{MR2573600}{article}{
  author={Bilyk, Dmitriy},
  author={Lacey, Michael T.},
  author={Parissis, Ioannis},
  author={Vagharshakyan, Armen},
  title={Exponential squared integrability of the discrepancy function in two dimensions},
  journal={Mathematika},
  volume={55},
  date={2009},
  number={1-2},
  pages={1--27},
}

\bib{MR610701}{article}{
   author={Chen, W. W. L.},
   title={On irregularities of distribution},
   journal={Mathematika},
   volume={27},
   date={1980},
   number={2},
   pages={153--170 (1981)},
   issn={0025-5793},
   review={\MR{610701 (82i:10044)}},
   doi={10.1112/S0025579300010044},
}

\bib{MR1896098}{article}{
  author={Chen, W. W. L.},
  author={Skriganov, M. M.},
  title={Explicit constructions in the classical mean squares problem in irregularities of point distribution},
  journal={J. Reine Angew. Math.},
  volume={545},
  date={2002},
  pages={67--95},
  %issn={0075-4102},
  review={\MR {1896098 (2003g:11083)}},
  %doi={10.1515/crll.2002.037},
}

\bib{MR2487691}{article}{
  author={Chen, William W. L.},
  author={Skriganov, Maxim M.},
  title={Orthogonality and digit shifts in the classical mean squares problem in irregularities of point distribution},
  conference={ title={Diophantine approximation}, },
  book={ series={Dev. Math.}, volume={16}, publisher={Springer Vienna}, },
  date={2008},
  pages={141--159},
  review={\MR {2487691 (2009m:11119)}},
}

\bib{MR2683394}{book}{
   author={Dick, Josef},
   author={Pillichshammer, Friedrich},
   title={Digital nets and sequences},
   note={Discrepancy theory and quasi-Monte Carlo integration},
   publisher={Cambridge University Press},
   place={Cambridge},
   date={2010},
   pages={xviii+600},
   isbn={978-0-521-19159-3},
   review={\MR{2683394 (2012b:65005)}},
}

\bib{MR0032833}{article}{
  author={Fine, N. J.},
  title={On the Walsh functions},
  journal={Trans. Amer. Math. Soc.},
  volume={65},
  date={1949},
  pages={372--414},
  %issn={0002-9947},
  review={\MR {0032833 (11,352b)}},
}

\bib{MR0087792}{article}{
   author={Price, J. J.},
   title={Certain groups of orthonormal step functions},
   journal={Canad. J. Math.},
   volume={9},
   date={1957},
   pages={413--425},
   issn={0008-414X},
   review={\MR{0087792 (19,411d)}},
}

\bib{MR0066435}{article}{
   author={Roth, K. F.},
   title={On irregularities of distribution},
   journal={Mathematika},
   volume={1},
   date={1954},
   pages={73--79},
   issn={0025-5793},
   review={\MR{0066435 (16,575c)}},
}

\bib{MR553291}{article}{
   author={Roth, K. F.},
   title={On irregularities of distribution. III},
   journal={Acta Arith.},
   volume={35},
   date={1979},
   number={4},
   pages={373--384},
   issn={0065-1036},
   review={\MR{553291 (81a:10065)}},
}

\bib{MR1117682}{book}{
  author={Schipp, F.},
  author={Wade, W. R.},
  author={Simon, P.},
  title={Walsh series},
  note={An introduction to dyadic harmonic analysis; With the collaboration of J. P\'al},
  publisher={Adam Hilger Ltd.},
  place={Bristol},
  date={1990},
  pages={x+560},
  %isbn={0-7503-0068-X},
  review={\MR {1117682 (92g:42001)}},
}

\bib{MR1618631}{article}{
  author={Skriganov, M. M.},
  title={Ergodic theory on ${\rm SL}(n)$, Diophantine approximations and anomalies in the lattice point problem},
  journal={Invent. Math.},
  volume={132},
  date={1998},
  number={1},
  pages={1--72},
  %issn={0020-9910},
  review={\MR {1618631 (2001a:11170)}},
  %doi={10.1007/s002220050217},
}

\bib{MR2283797}{article}{
  author={Skriganov, M. M.},
  title={Harmonic analysis on totally disconnected groups and irregularities of point distributions},
  journal={J. Reine Angew. Math.},
  volume={600},
  date={2006},
  pages={25--49},
  %issn={0075-4102},
  review={\MR {2283797 (2007k:11122)}},
  %doi={10.1515/CRELLE.2006.085},
}

\bib{MR2893524}{article}{
   author={Skriganov, M. M.},
   title={The Khinchin inequality and Chen's theorem},
   language={Russian, with Russian summary},
   journal={Algebra i Analiz},
   volume={23},
   date={2011},
   number={4},
   pages={179--204},
   issn={0234-0852},
   translation={
      journal={St. Petersburg Math. J.},
      volume={23},
      date={2012},
      number={4},
      pages={761--778},
      issn={1061-0022},
   },
   review={\MR{2893524 (2012m:11100)}},
   doi={10.1090/S1061-0022-2012-01216-1},
}

\bib{toAppear}{article}{
   author={Skriganov, M. M.},
   title={On the Mean Values of $ L_q$-Discrepancies of Point Distributions},
      journal={St. Petersburg Math. J., to appear},
      date={2011},
}

\end{biblist}
\end{bibsection}

\end{document}